\newtheorem{theorem}{Theorem}%[section]
\newtheorem{definition}{Definition}%[section] 
\newtheorem{exam}{Example}%[section]
\newtheorem{prop}{Proposition}%[section]
\newtheorem{lem}{Lemma}%[section]
\newtheorem{cor}{Corollary}%[section]
\DeclareMathOperator{\convo}{\xrightarrow[]{o}}
\DeclareMathOperator{\convn}{\xrightarrow[]{\|\cdot\|}}
\renewcommand{\subsection}{\@startsection{subsection}{1}
{0pt}{3.25ex plus 1ex minus.2ex}{-1em}{\normalfont\normalsize\bf}}
\begin{document}

\title{{\bf On collectively $\sigma$-Levi sets of operators}}
\date{}
\maketitle
\author{\centering{{Eduard Emelyanov$^{1}$\\ 
\small $1$ Sobolev Institute of Mathematics, Novosibirsk, Russia}

\abstract{A collectively $\sigma$-Levi set of operators
is a generalization of the $\sigma$-Levi operator. 
By use of collective order convergence, we investigate 
relations between collectively $\sigma$-Levi and 
collectively compact sets of operators.

{\bf{Keywords:}} 
{\rm vector lattice, normed lattice, collective order convergence, 
collectively $\sigma$-Levi set, collectively compact set
}\\

{\bf MSC2020:} {\rm 46A40, 46B42, 47L05}
\large

\bigskip

%%%%%%%%%%%%%%%%%%%%
\section{Introduction}
%%%%%%%%%%%%%%%%%%%%

\hspace{3mm}
Several kinds of Levi operators were studied recently in
\cite{AlEG2022,GE2022,ZC2022,E2024}. 
The present paper concerns collective properties of $\sigma$-Levi operators.

In what follows, vector spaces are real and operators are linear. 
The letters $E$ and $F$ stand for vector lattices,
symbols $\text{\rm L}(E,F)$, $\text{\rm L}_{\text{\rm FR}}(E,F)$, and
\text{\rm K}(E,F) for the spaces of linear, finite rank, and compact operators from $E$ to $F$, 
$B_X$ for the closed unit ball of $X$, and $I_X$ for the identity operator in $X$.
We write $y_n\downarrow 0$, whenever 
$y_{n'}\le y_n$ for all $n'\ge n$ and $\inf_E y_n=0$.

Throughout the paper, we say that a sequence $(x_n)$ in $E$ is order convergent to 
$x\in E$ (briefly, $x_n\stackrel{\text{\rm o}}{\to} x$) if there exists a
sequence $(p_n)$ in $E$, $p_n\downarrow 0$ such that 
$|x_n-x|\le p_n$ holds for all $n$. A sequence $(x_n)$ in $E$ is 
order Cauchy if, for some $p_n\downarrow 0$ in $E$, 
$|x_{n'}-x_{n''}|\le p_n$ whenever $n',n''\ge n$. A vector lattice
$E$ is said to be sequentially order complete whenever each 
order Cauchy sequence in $E$ is order convergent.

The following definition is an adopted version of \cite[Definition 1.1]{AlEG2022} and 
\cite[Definition 1]{GE2022}.

\begin{definition}\label{order-to-topology} 
{\em
An operator $T$ from a normed lattice $E$ to a vector lattice $F$ is:
\begin{enumerate}[$a)$]
\item 
\text{\rm $\sigma$-Levi} if, for every 
increasing bounded sequence $(x_n)$ in $E_+$,
there exists $x\in E$ with $Tx_n\stackrel{\text{\rm o}}{\to} Tx$.
The set of such operators is denoted by $\text{\rm L}^\sigma_{\text{\rm Levi}}(E,F)$.
\item 
\text{\rm quasi-c-$\sigma$-Levi} if, for every 
increasing bounded sequence $(x_n)$ in $E_+$,
the sequence $(Tx_n)$ is order convergent.
The set of such operators is denoted by $\text{\rm L}^\sigma_{\text{\rm qcLevi}}(E,F)$.
\item 
\text{\rm quasi-$\sigma$-Levi} if, for every 
increasing boun\-ded sequence $(x_n)$ in $E_+$,
the sequence $(Tx_n)$ is order Cauchy.
The set of such operators is denoted by $\text{\rm L}^\sigma_{\text{\rm qLevi}}(E,F)$.
\end{enumerate}}
\end{definition}
\medskip
\noindent
Clearly, 
$\text{\rm L}^\sigma_{\text{\rm Levi}}(E,F)\subseteq\text{\rm L}^\sigma_{\text{\rm qcLevi}}(E,F)   
\subseteq\text{\rm L}^\sigma_{\text{\rm qLevi}}(E,F)$.
The following example shows that both inclusions are proper in general
(cf., \cite[Example 1]{E2024}).

\begin{exam}\label{quasi KB yet not KB}%%%OK
{\em
First we show that the inclusion 
$\text{\rm L}^\sigma_{\text{\rm Levi}}(E)\subseteq\text{\rm L}^\sigma_{\text{\rm qcLevi}}(E)$
can be proper.
Define an operator $T$ on $E=C[0,1]\oplus L_1[0,1]$, by
$T\bigl((\phi,\psi)\bigl):=(0,\phi)$ for $\phi\in C[0,1]$ and $\psi\in L_1[0,1]$. 
Clearly, $T\in\text{\rm L}^\sigma_{\text{\rm qcLevi+}}(E)$.
Consider $\phi_n\in C[0,1]$ that equals 
to 1 on $\bigl[0,\frac{1}{2}-\frac{1}{2^n}\bigl]$,
to 0 on $\bigl[\frac{1}{2},1\bigl]$, and is linear otherwise.
Let $f_n:=(\phi_n,0)$. Then $(f_n)$ is bounded and increasing in $E_+$,
and $Tf_n\convo(0,g)$, where $g\in L_1[0,1]$ is the indicator 
function of $\bigl[0,\frac{1}{2}\bigl]$.
Since $g\not\in C[0,1]$, there is no such an $f\in E$ that
$Tf=(0,g)$, and hence $T\notin\text{\rm L}^\sigma_{\text{\rm Levi}}(E)$.

For the second inclusion, consider the Banach lattice $c$ of convergent real sequences
and denote elements of $c$ by $\sum_{n=1}^\infty a_n \cdot e_n$,
where $e_n$ is the n-th unit vector of $c$ and $(a_n)$ converges in $\mathbb{R}$.
Since each bounded increasing 
sequence in $c_+$ is \text{\rm o}-Cauchy, $I_c\in\text{\rm L}^\sigma_{\text{\rm qLevi}}(c)$.
However, a bounded increasing sequence $I_cf_n=f_n :=\sum_{k=1}^n e_{2k-1}$ in $c_+$ is not 
order convergent. Thus, $I_c\notin\text{\rm L}^\sigma_{\text{\rm qcLevi}}(c)$.
}
\end{exam}

\medskip
We shall use the following lemma (cf.,  \cite[Lemmas 1,2]{E2024}).

\begin{lem}\label{prop2}%%%OK
Let $E$ be a normed lattice and let $F$ be a vector lattice. The following holds.
\begin{enumerate}[$i)$]
\item
$\text{\rm L}^\sigma_{\text{\rm qcLevi}}(E,F)$,
and $\text{\rm L}^\sigma_{\text{\rm qLevi}}(E,F)$ are vector spaces.
\item
$\text{\rm L}_{\text{\rm FR}}(E,F)\subseteq\text{\rm L}^\sigma_{\text{\rm Levi}}(E,F)$.
\item
If $F$ is a normed lattice then 
$\text{\rm K}_+(E,F)\subseteq\text{\rm L}^\sigma_{\text{\rm qcLevi}}(E,F)$.
\end{enumerate}
\end{lem}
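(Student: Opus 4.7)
The three parts are essentially independent; I will tackle them in order, with (ii) being the one that needs the most structure.

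For (i), I combine dominating sequences via the triangle inequality. If $T_1,T_2 \in \text{\rm L}^\sigma_{\text{\rm qcLevi}}(E,F)$ and $(x_n)\subseteq E_+$ is increasing and norm-bounded, then $T_ix_n \convo y_i$ with $|T_ix_n - y_i| \le p_n^{(i)} \downarrow 0$ in $F$ for $i=1,2$, so $|(T_1+T_2)x_n - (y_1+y_2)| \le p_n^{(1)}+p_n^{(2)} \downarrow 0$; scalar multiplication by $\alpha$ is handled by $|\alpha|p_n$. The same dominating-sequence bookkeeping applied to $|Tx_{n'}-Tx_{n''}|$ settles closure of $\text{\rm L}^\sigma_{\text{\rm qLevi}}(E,F)$ under the vector-space operations.

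For (ii), I represent a finite-rank operator as $T = \sum_{i=1}^k f_i \otimes y_i$ with $f_i \in E^*$ and $y_i \in F$. The key structural fact is that the norm dual of the normed lattice $E$ is a Banach lattice, so each $f_i$ admits a Riesz decomposition $f_i = f_i^+ - f_i^-$ with $f_i^\pm \in E^*_+$. For $(x_n)$ increasing and norm-bounded in $E_+$, each $f_i^\pm(x_n)$ is then bounded and increasing in $\mathbb{R}$, hence convergent, so $f_i(x_n) \to c_i$. Since the linear map $\Phi := (f_1,\ldots,f_k) : E \to \mathbb{R}^k$ has closed (finite-dimensional) image, $(c_1,\ldots,c_k) \in \Phi(E)$, producing $x \in E$ with $Tx = \sum c_i y_i =: y$. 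Setting $u := \sum_i |y_i| \in F_+$ and $\epsilon_n := \max_i |f_i(x_n) - c_i|$, the estimate $|Tx_n - Tx| \le \epsilon_n u$ together with $p_n := \bigl(\sup_{m \ge n}\epsilon_m\bigr) u \downarrow 0$ in $F$ yields $Tx_n \convo Tx$.

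For (iii), with $F$ a normed lattice and $T \in \text{\rm K}_+(E,F)$, I use positivity and compactness in tandem. Positivity makes $(Tx_n)$ increasing in $F_+$ whenever $(x_n)$ is increasing in $E_+$; compactness gives a norm-convergent subsequence $Tx_{n_k} \to y$. Norm-closedness of $F_+$ applied to $y - Tx_m = \lim_k(Tx_{n_k} - Tx_m) \ge 0$ shows $y$ is an upper bound for $(Tx_n)$, and applied to $z - Tx_{n_k}$ for any other upper bound $z$ it yields $z \ge y$, so $y = \sup_n Tx_n$ and $Tx_n \uparrow y$. Therefore $p_n := y - Tx_n \downarrow 0$ dominates $|Tx_n - y|$, giving $Tx_n \convo y$ and $T \in \text{\rm L}^\sigma_{\text{\rm qcLevi}}(E,F)$.

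The main obstacle I anticipate is the promotion, in part (ii), of real coordinatewise convergence to genuine order convergence in the possibly large lattice $F$: verifying that the dominating $p_n$ actually satisfies $p_n \downarrow 0$ requires the Archimedean property of $F$ (so that $\alpha_n u \downarrow 0$ whenever $\alpha_n \downarrow 0$ in $\mathbb R$ and $u \in F_+$). The remaining pieces — the Riesz decomposition in $E^*$, the closedness of the linear image in $\mathbb{R}^k$, and the norm-closedness of $F_+$ used in (iii) — are routine once identified.
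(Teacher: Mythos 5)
Your proof is correct and follows essentially the same route as the paper's: part $(ii)$ via the decomposition $f_i=f_i^+-f_i^-$ in the Banach lattice $E'$ together with monotone convergence of the real sequences $f_i^\pm(x_n)$ and finite-dimensionality to land the limit in the range of $T$, and part $(iii)$ via a norm-convergent subsequence upgraded by positivity/monotonicity to order convergence of the whole sequence. The only cosmetic differences are that you obtain the preimage $x$ from closedness of $\Phi(E)\subseteq\mathbb{R}^k$ rather than from the choice $y_k\in T(E)$, and that in $(iii)$ you spell out (via norm-closedness of $F_+$) the standard fact the paper merely cites, namely that a norm-convergent increasing sequence order-converges to its norm limit.
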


\begin{proof}
$i)$ It is trivial.  

\medskip
$ii)$\
Let $T\in\text{\rm L}_{\text{\rm FR}}(E,F)$, say
$T = \sum_{k=1}^n f_k \otimes y_k$ 
for $y_1, \dots, y_n \in T(E)$ and $f_1, \dots, f_n \in E'$.
Denote
$$
   T_1:=\sum_{k=1}^n f_k^+ \otimes y_k \ \ \text{\rm and} \ \ 
   T_2:=\sum_{k=1}^n f_k^- \otimes y_k. 
$$
Let $(x_m)$ be an increasing bounded sequence in $E_+$. 
Then, for each $k$, the sequences $f_k^+(x_m)$ 
and $f_k^-(x_m)$ are increasing and bounded. 
Thus, $f_k^+(x_m)\to a_k$ and 
$f_k^-(x_m)\to b_k$ for some $a_k,b_k\in\mathbb{R}_+$. Since
$\dim(T(E))<\infty$,
$$
   T_1x_m\convo\sum_{k=1}^n a_k y_k\in T(E)\ \ \ \text{\rm and} \ \ \ 
   T_2x_m\convo\sum_{k=1}^n b_k y_k\in T(E).
$$
Therefore,
$$
  Tx_m=(T_1x_m-T_2x_m)\convo\sum_{k=1}^n(a_k-b_k)y_k\in T(E).
$$
Take an $x\in E$ such that $Tx=\sum_{k=1}^n(a_k-b_k)y_k$. Then $Tx_m\convo Tx$.
We conclude $T\in\text{\rm L}^\sigma_{\text{\rm Levi}}(E,F)$.

\medskip
$iii)$\ 
Let $T\in\text{\rm K}_+(E,F)$ and let $(x_m)$ be an 
increasing sequence in $(B_E)_+$. Then $(Tx_m)$ has a subsequence 
$(Tx_{m_j})$ satisfying $\left\|Tx_{m_j}-y\right\|\to 0$ 
for some $y \in F$. Since $Tx_m\uparrow$ then $\left\|Tx_m-y\right\|\to 0$.
As each norm convergent increasing sequence converges 
in order to the same limit then $Tx_m\convo y$, and consequently 
$T\in\text{\rm L}^\sigma_{\text{\rm qcLevi}}(E,F)$.
\end{proof}

The next example strengthens Example~\ref{quasi KB yet not KB}  
by showing that the inclusion 
$
   \text{\rm L}^\sigma_{\text{\rm Levi}}(E)\cap\text{\rm K}_+(E)\subseteq
   \text{\rm L}^\sigma_{\text{\rm qcLevi}}(E)\cap\text{\rm K}_+(E)
$
can be proper (cf., \cite[Example 2]{E2024}).

\begin{exam}\label{Example1 c_0}%%%OK
{\em
Let $(\alpha_n)$ be a vanishing real sequence consisting of non-zero positive terms.
Define an operator $S$ from $c$ to $c_0$ by
$
   S\left( \sum_{n=1}^\infty a_n e_n\right)  = \sum_{n=1}^\infty(\alpha_n a_n) e_n. 
$
Then $S\in\text{\rm K}_+(c,c_0)$, and hence 
$S\in\text{\rm L}^\sigma_{\text{\rm qcLevi}}(c,c_0)$
by Lemma~\ref{prop2}. 
Take a bounded increasing sequence 
$x_n=\sum\limits_{k=1}^n e_{2k}$ in $c_+$
The sequence 
$(Sx_n) =\Bigl(\sum\limits_{k=1}^n\alpha_{2k} e_{2k}\Bigl)$ converges in order to 
$\sum\limits_{k=1}^\infty\alpha_{2k} e_{2k}\in c_0$, 
however there is no $x\in c$ with
$Sx=\sum\limits_{k=1}^\infty\alpha_{2k} e_{2k}$.
Indeed, would such $x=\sum\limits_{k=1}^\infty a_k e_k\in c$ with
$Sx=S\left(\sum\limits_{k=1}^\infty a_k e_k \right)=
\sum\limits_{k=1}^\infty\alpha_{2k} e_{2k}$ exist, it must satisfies $a_k=1$ 
for even $k$-th and $a_k=0$ 
for odd $k$-th, which is absurd. 
Therefore, $S\notin\text{\rm L}^\sigma_{\text{\rm Levi}}(c,c_0)$.

The operator $S$ is also a counter-example to \cite[Prop.3.5]{AlEG2022}.

Now, define a sequence $(S_i)$ of operators in $\text{\rm L}_{\text{\rm FR}}(c,c_0)$ by
$
   S_i\left(\sum_{n=1}^\infty a_n e_n\right)  = \sum_{n=1}^i(\alpha_n a_n) e_n. 
$
Trivially, $S_i\convn S$. By Lemma~\ref{prop2},
$S_i\in\text{\rm L}^\sigma_{\text{\rm Levi}}(c,c_0)$.
Since $S\notin\text{\rm L}^\sigma_{\text{\rm Levi}}(c,c_0)$ then
the set $\text{\rm L}^\sigma_{\text{\rm Levi}}(c,c_0)$ is not closed
under the operator norm.
}
\end{exam}

It is worth noting that, generally, $\text{\rm L}^\sigma_{\text{\rm Levi}}(E,F)$ need not 
to be a vector space \cite[Example 8]{E2024}.

\medskip
The present paper is organized as follows.
Section 2 is devoted to elementary properties of collective order convergence of
families of sequences. In Section 3 is devoted to collectively $\sigma$-Levi sets of operators,
their relations to collectively compact sets, and for the domination problem.

For unexplained terminology and notation we refer to \cite{AB2003,AP1968,Kus2000,Mey1991}.

\newpage

%%%%%%%%%%%%%%%%%%%%
\section{Collective order convergence}
%%%%%%%%%%%%%%%%%%%%

\hspace{3mm}
Working with families of sequences of elements of a vector lattice requires
a certain notion of ``collective" order convergence. In what follows, we
identify $E$-valued sequences and elements of the vector lattice $E^{\mathbb{N}}$ 
equipped with the pointwise linear and lattice operations.  

\begin{definition}\label{COCS} 
{\em
Let $\mathcal{A}\subseteq E^{\mathbb{N}}$.
We say that $\mathcal{A}$ collective order converges to an indexed 
subset $\{c_a\}_{a\in\mathcal{A}}$ of $E$
(briefly, $\mathcal{A}\stackrel{\text{\rm c-o}}{\to}\{c_a\}_{a\in\mathcal{A}}$)
whenever there exists a sequence $(p_n)$ in $E$, $p_n\downarrow 0$ 
such that $|a_n-c_a|\le p_n$
holds for all $n$ and all $(a_n)\in\mathcal{A}$.
We call $\mathcal{A}$ collective order-null if 
$\mathcal{A}\stackrel{\text{\rm c-o}}{\to}\{0\}_{a\in\mathcal{A}}$
}
\end{definition}

In this section we give some elementary properties of 
collective order convergence which are used in Section 3.
The following proposition is elementary and its proof is left to the reader.

\begin{prop}\label{elem prop}%%%OK
Let $\mathcal{A}$ and $\mathcal{B}$ be 
nonempty collective order convergent subsets of $E^{\mathbb{N}}$,
and let $\alpha,\beta\in\mathbb{R}$.
The following sets are collective order convergent.
\begin{enumerate}[$i)$]
\item
$\mathcal{A}\cup\mathcal{B}$.
\item
$\alpha\mathcal{A}+\beta\mathcal{B}:=
\{(\alpha a_n+\beta b_n)\}_{a\in\mathcal{A}; b\in\mathcal{B}}$.
\item
$|\mathcal{A}|:=\{(|a_n|)\}_{a\in\mathcal{A}}$.
\item
The convex hull $\text{\rm co}(\mathcal{A})$ of $\mathcal{A}$ in $E^{\mathbb{N}}$.
\end{enumerate}
Moreover,
\begin{enumerate}
\item[$v)$]
If $\mathcal{A}\stackrel{\text{\rm c-o}}{\to}\{c_a\}_{a\in\mathcal{A}}$ and
$\mathcal{A}\stackrel{\text{\rm c-o}}{\to}\{c'_a\}_{a\in\mathcal{A}}$ 
then $c_a=c'_a$ for all $a\in\mathcal{A}$.
\item[$vi)$]
$\mathcal{A}\stackrel{\text{\rm c-o}}{\to}\{c_a\}_{a\in\mathcal{A}}$ iff
$\{(a_n-c_a)\}_{a\in\mathcal{A}}\stackrel{\text{\rm c-o}}{\to}\{0\}_{a\in\mathcal{A}}$.
\item[$vii)$]
A sequence $(a_n)$ in $E$ order converges iff the set $\{(a_n)\}$ is collective order convergent.
\end{enumerate}
\end{prop}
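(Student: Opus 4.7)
The plan is to verify all seven clauses by direct bookkeeping with the dominating sequences furnished by Definition~\ref{COCS}. Fix $(p_n)$ and $(q_n)$ in $E_+$, both $\downarrow 0$, witnessing respectively $\mathcal{A}\stackrel{\text{\rm c-o}}{\to}\{c_a\}_{a\in\mathcal{A}}$ and $\mathcal{B}\stackrel{\text{\rm c-o}}{\to}\{d_b\}_{b\in\mathcal{B}}$. Since $r_n\downarrow 0$ and $s_n\downarrow 0$ imply $\gamma r_n+\delta s_n\downarrow 0$ for $\gamma,\delta\ge 0$, the natural candidates for the new witnessing sequences are $p_n+q_n$, $|\alpha|p_n+|\beta|q_n$, and in several clauses $p_n$ itself.

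For clauses $i)$ and $ii)$ I would combine the two witnesses with the triangle inequality: the sequence $p_n+q_n$ dominates $|c-\text{(limit)}|$ at every element of $\mathcal{A}\cup\mathcal{B}$, and $|\alpha|p_n+|\beta|q_n$ dominates $|(\alpha a_n+\beta b_n)-(\alpha c_a+\beta d_b)|$ over the product-indexed family. Clause $iii)$ follows from the standard lattice inequality $\bigl||a_n|-|c_a|\bigr|\le |a_n-c_a|\le p_n$, with limit $\{|c_a|\}_{a\in\mathcal{A}}$. For clause $iv)$ an element of $\text{\rm co}(\mathcal{A})$ has the form $\sum_{i=1}^k\lambda_i a^{(i)}$ with $\lambda_i\ge 0$, $\sum_i\lambda_i=1$, and convexity gives
$$
\Bigl|\sum_{i=1}^k\lambda_i a^{(i)}_n-\sum_{i=1}^k\lambda_i c_{a^{(i)}}\Bigr|
\le\sum_{i=1}^k\lambda_i |a^{(i)}_n-c_{a^{(i)}}|
\le\sum_{i=1}^k\lambda_i p_n=p_n,
$$
so the same $(p_n)$ serves as a uniform witness for the whole convex hull.

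For the uniqueness clause $v)$, two witnesses $(p_n),(p'_n)$ yield $|c_a-c'_a|\le |a_n-c_a|+|a_n-c'_a|\le p_n+p'_n$ for every $n$; since $p_n+p'_n\downarrow 0$, taking the infimum forces $c_a=c'_a$. Clauses $vi)$ and $vii)$ are immediate unfoldings of Definition~\ref{COCS}, noting that $|(a_n-c_a)-0|=|a_n-c_a|$ and that a singleton $\{(a_n)\}$ is collectively order convergent precisely when there exists $(p_n)\downarrow 0$ with $|a_n-c|\le p_n$. There is no genuine obstacle; the only point requiring the briefest moment of care is clause $iv)$, where one uses that a convex combination of the constant sequence $(p_n,p_n,\dots)$ is itself $(p_n)$, which is what allows a single witness to work uniformly across the hull.
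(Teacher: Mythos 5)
Your proof is correct, and it is precisely the elementary verification the paper has in mind (the paper explicitly leaves this proposition's proof to the reader). The only point worth a parenthetical remark is that in $i)$ and $iv)$ the limit assignment must be well defined when a sequence lies in $\mathcal{A}\cap\mathcal{B}$ or admits two convex representations, but this follows at once from your uniqueness argument in $v)$ applied with the witness $p_n+q_n$ (respectively $2p_n$).
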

\noindent
Note that the passing to solid hull does not preserve collective order convergence
for any nontrivial $E$. Indeed, let $0\ne x\in E$. Then the set 
$\mathcal{A}=\{(a_n): a_n\equiv x\}$ of one constantly $x$ sequence 
is collective order convergent,
yet its solid hull $\text{\rm sol}(\mathcal{A})$ is not, as
$\text{\rm sol}(\mathcal{A})$ contains a sequence 
$\Bigl(\frac{1+(-1)^n}{2}x\Bigl)$ that does not order converge.
It should be clear that if $E$ is an Archimedean vector lattice then,
for each order convergent to zero sequence $(a_n)$ in $E$ possessing 
at least one non-zero term, the set $\{(\lambda a_n): \lambda\in\mathbb{R}\}$
is not collective order-null. Also, it is worth noting that the set 
$\{(\delta_k^n)_n: k\in\mathbb{N}\}$ consisting of order-null real sequences
is not collective order-null.

\medskip
The next theorem extends items $ii)$ and $iv)$ of Proposition~\ref{elem prop}
to the Banach lattice setting as follows.

\begin{theorem}\label{elem BL prop}%%%OK
Let $E$ be a Banach lattice, let $(p_{i,n})_n$ be sequences in $B_E$
satisfying $p_{i,n}\downarrow 0$ for each $i\in\mathbb{N}$, and 
let $\mathcal{A}_i$ be nonempty subsets of $E^\mathbb{N}$
such that $|a_{i,n}|\le p_{i,n}$ holds for all $i,n\in\mathbb{N}$, and 
$ (a_{i,n})_n\in\mathcal{A}_i$. Then, the set
$$
   \sum\limits_{i=1}^\infty \alpha_i \mathcal{A}_i=
   \Big\{\Big(\sum\limits_{i=1}^\infty \alpha_i a_{i,n}\Big): 
   (a_{i,n})_n\in\mathcal{A}_i\ \text{\rm and}\ \sum\limits_{i=1}^\infty|\alpha_i|\le 1\Big\}
$$
is collective order-null. In particular, for every $M>0$ and 
$\mathcal{A}\stackrel{\text{\rm c-o}}{\to}\{0\}_{a\in\mathcal{A}}$
in $E^{\mathbb{N}}$, the set
$$
   \Big\{\Big(\sum\limits_{i=1}^\infty\alpha_ia_{i,n}\Big): (a_{i,n})_n\in\mathcal{A}
   \ \text{\rm and} \ \sum\limits_{i=1}^\infty|\alpha_i|\le M\Big\}
$$
is collective order-null.
\end{theorem}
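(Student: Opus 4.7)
The plan is to exhibit a single sequence $q_n\downarrow 0$ in $E$ that dominates, uniformly over all admissible choices of $(a_{i,\cdot})\in\mathcal{A}_i$ and of $(\alpha_i)$ with $\sum_i|\alpha_i|\le 1$, the absolute value of $\sum_i\alpha_i a_{i,n}$. My candidate is
\[
   q_n\;:=\;\sum_{i=1}^\infty p_{i,n},
\]
interpreted as a norm-convergent series in the Banach lattice $E$; its well-definedness must be extracted from the hypothesis $(p_{i,n})_n\subset B_E$ together with the monotonicity $p_{i,n+1}\le p_{i,n}$. Granting convergence, $q_{n+1}\le q_n$ is immediate from the termwise inequality and $q_n\ge 0$ from closedness of $E_+$.

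The decisive step is to show $\inf_n q_n=0$. Given any $u$ with $u\le q_n$ for every $n$, I would fix $N\in\mathbb{N}$ and split
\[
   q_n\;=\;\sum_{i=1}^N p_{i,n}\;+\;r^{(N)}_n,
   \qquad r^{(N)}_n\,:=\,\sum_{i>N}p_{i,n}\;\le\;r^{(N)}_1.
\]
Since $\sum_{i=1}^N p_{i,n}$ is a \emph{finite} lattice sum of sequences each decreasing to zero, one has $\inf_n\sum_{i=1}^N p_{i,n}=0$. From $u-r^{(N)}_1\le\sum_{i=1}^N p_{i,n}$ for every $n$ it follows that $u\le r^{(N)}_1$, and then the Banach lattice structure enters: $0\le u^+\le r^{(N)}_1$ together with $\|r^{(N)}_1\|\to 0$ as $N\to\infty$ forces $\|u^+\|=0$, hence $u\le 0$. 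Therefore $q_n\downarrow 0$.

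With $q_n$ in hand the uniform bound is routine: using $|\alpha_i|\le\sum_j|\alpha_j|\le 1$ and $|a_{i,n}|\le p_{i,n}$,
\[
   \Big|\sum_{i=1}^\infty\alpha_i a_{i,n}\Big|\;\le\;\sum_{i=1}^\infty|\alpha_i|\,p_{i,n}\;\le\;\sum_{i=1}^\infty p_{i,n}\;=\;q_n.
\]
The ``in particular'' assertion then falls out on specializing to the case $\mathcal{A}_i\equiv\mathcal{A}$ with a common regulator $p_n\downarrow 0$ for $\mathcal{A}$ (suitably rescaled into $B_E$), and absorbing $M$ at the end via the replacement $q_n\mapsto Mq_n$.

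I expect the main obstacle to lie in the two places where the Banach lattice hypothesis is doing essential work. First, in establishing that $\sum_i p_{i,n}$ actually converges in norm for each $n$: this must be wrung out of the norm bound $\|p_{i,n}\|\le 1$ together with the termwise decrease, via a Cauchy tail estimate for the partial sums. Second, in the order-from-norm bridge $\|u^+\|\le\|r^{(N)}_1\|\to 0\Rightarrow u^+=0$ inside the infimum step, where norm completeness is converted back into an order conclusion. Once both are arranged, the remaining bookkeeping is elementary.
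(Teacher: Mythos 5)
Your candidate regulator $q_n:=\sum_{i=1}^\infty p_{i,n}$ does not exist in general, and this is a fatal gap rather than a technicality to be ``wrung out'' of the hypotheses: the assumptions give only $\|p_{i,n}\|\le 1$, so the partial sums $\sum_{i=1}^N p_{i,n}$ can have norm of order $N$ and the series need not converge in $E$ (take $E=\mathbb{R}$ and $p_{i,n}=1/n$ for all $i$). For the same reason the tail estimate $\|r^{(N)}_1\|\to 0$, on which your infimum argument rests, has no basis. The paper's own proof avoids this by working with the $(\alpha_i)$-dependent majorant $p_n:=\sum_{i=1}^\infty|\alpha_i|p_{i,n}$, whose norm convergence follows from $\sum_i|\alpha_i|\le 1$ together with $\|p_{i,n}\|\le 1$, and whose tails satisfy $\big\|\sum_{i>m}|\alpha_i|p_{i,n}\big\|\le\sum_{i>m}|\alpha_i|\to 0$; your split into a finite head with infimum zero plus a small tail is then essentially the paper's argument for $p_n\downarrow 0$.

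That said, your instinct that Definition~\ref{COCS} demands a \emph{single} regulator valid simultaneously for all admissible $(\alpha_i)$ and all $(a_{i,n})_n\in\mathcal{A}_i$ is correct, and it is a point the paper's proof passes over, since its $p_n$ varies with $(\alpha_i)$. Unfortunately no uniform regulator exists under the stated hypotheses: with $E=\mathbb{R}$, $p_{i,n}:=\min\{1,i/n\}$ and $\mathcal{A}_i:=\{(p_{i,n})_n\}$, the choices $\alpha=e_k$ show that $\sum_i\alpha_i\mathcal{A}_i$ contains every sequence $(p_{k,n})_n$, and any common majorant $q_n$ would have to satisfy $q_n\ge p_{n,n}=1$, so it cannot decrease to zero. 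Hence the first assertion of the theorem cannot be established by your route (or any other) with $i$-dependent regulators; only the ``in particular'' clause, where all $\mathcal{A}_i$ share one regulator $p_n$, survives, and there your strategy does work with the uniform choice $q_n:=Mp_n$ --- which is all that is actually used in Theorem~\ref{prop2iii-CLS}.
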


\begin{proof}
Passing to the norm-limit as $m\to\infty$ in the following inequality
$$
  \Big|\sum\limits_{i=1}^m \alpha_i a_{i,n}\Big|\le
   \sum\limits_{i=1}^m |\alpha_i| |a_{i,n}|\le \sum\limits_{i=1}^m |\alpha_i| p_{i,n}
  \le\sum\limits_{i=1}^\infty |\alpha_i| p_{i,n},
$$
where $(a_{i,n})_n\in\mathcal{A}_i$, we obtain 
$\Big|\sum\limits_{i=1}^\infty \alpha_i a_{i,n}\Big|\le
p_n:=\sum\limits_{i=1}^\infty |\alpha_i| p_{i,n}$ for all $n$. 
Clearly, $(p_n)$ is decreasing. It remains to proof $p_n\downarrow 0$.
Suppose in contrary $0<a\le p_n$ for all $n$. Fix an arbitrary $m\in\mathbb{N}$. 
Since $0<a\le\sum\limits_{i=1}^m |\alpha_i| p_{i,n}+
\sum\limits_{i=m+1}^\infty |\alpha_i| p_{i,n}$ for all $n$, and since 
$\inf\limits_{n\in\mathbb{N}}\sum\limits_{i=1}^m |\alpha_i| p_{i,n}=0$, 
we obtain that $0<a\le\sum\limits_{i=m+1}^\infty |\alpha_i| p_{i,n}$
for all $m,n\in\mathbb{N}$. Therefore, 
$$
   0<\|a\|\le\limsup\limits_{m\to\infty}\Big\|\sum\limits_{i=m+1}^\infty |\alpha_i| p_{i,n}\Big\|\le
   \lim\limits_{m\to\infty}\sum\limits_{i=m+1}^\infty |\alpha_i|=0,
$$
which is absurd.

The rest of proof follows from the previous part by taking
$\mathcal{A}_i=\mathcal{A}$ for all $i\in\mathbb{N}$.
\end{proof}

We finish this section with the following notion of collective order Cauchy
set of sequences.

\begin{definition}\label{COCompleteS} 
{\em
A set $\mathcal{A}\subseteq E^{\mathbb{N}}$
is collective order Cauchy if, for some $p_n\downarrow 0$ in $E$, 
$|a_{n'}-a_{n''}|\le p_n$ holds for all $a\in\mathcal{A}$ whenever $n',n''\ge n$.
A vector lattice $E$ is sequentially collective order complete if each collective order Cauchy
subset of $E^{\mathbb{N}}$ is collective order convergent.
}
\end{definition}

\noindent
The next elementary proposition shows that the sequential collective order completeness
agrees with sequential order completeness.

\noindent
\begin{prop}\label{elem complete}%%%OK
Let $E$ be a vector lattice. The following conditions
are equivalent.
\begin{enumerate}[$i)$]
\item
If a sequence $(x_n)$ in $E$ satisfies $|x_{n'}-x_{n''}|\le p_n$, whenever $n',n''\ge n$
for some $p_n\downarrow 0$ in $E$, then there exists $x\in E$ with  
$|x_n-x|\le p_n$ for all $n$.
\item
If a subset $\mathcal{A}$ of $E^{\mathbb{N}}$
satisfies $|a_{n'}-a_{n''}|\le p_n$ for all $a\in\mathcal{A}$
and some $p_n\downarrow 0$ in $E$, whenever $n',n''\ge n$,
then there exists an indexed subset $\{c_a\}_{a\in\mathcal{A}}$ of $E$
such that $|a_n-c_a|\le p_n$ holds for all $n$ and all $(a_n)\in\mathcal{A}$.
\item 
$E$ is sequentially collective order complete.
\item 
$E$ is sequentially order complete.
\end{enumerate}
\end{prop}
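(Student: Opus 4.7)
The plan is to close the cycle $(ii)\Rightarrow(iii)\Rightarrow(iv)\Rightarrow(i)\Rightarrow(ii)$. Three of the four implications are essentially definitional unpackings, while the only real content lies in showing that the order limit furnished by sequential order completeness can be forced to respect the specific witness sequence $(p_n)$ coming from the order-Cauchy condition.

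First, $(ii)\Rightarrow(iii)$ is immediate: the hypothesis in $(ii)$ is exactly that $\mathcal{A}$ is collective order Cauchy with witness $(p_n)$, and the conclusion ``$|a_n-c_a|\le p_n$ for all $n$ and all $(a_n)\in\mathcal{A}$'' is precisely $\mathcal{A}\stackrel{\text{c-o}}{\to}\{c_a\}_{a\in\mathcal{A}}$ with the same witness $(p_n)$. Next, $(iii)\Rightarrow(iv)$: given an order Cauchy $(x_n)$ in $E$, the singleton $\mathcal{A}:=\{(x_n)\}\subseteq E^{\mathbb{N}}$ is collective order Cauchy, so $(iii)$ applied to $\mathcal{A}$ produces an order limit of $(x_n)$, by item $vii)$ of Proposition~\ref{elem prop}.

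The main (and only non-trivial) step is $(iv)\Rightarrow(i)$. A sequence $(x_n)$ satisfying $|x_{n'}-x_{n''}|\le p_n$ whenever $n',n''\ge n$ is order Cauchy, so $(iv)$ yields some $x\in E$ with $x_n\stackrel{\text{o}}{\to}x$; pick a witness $r_k\downarrow 0$ in $E$ with $|x_m-x|\le r_k$ for $m\ge k$. Fix $n$ and any $k\in\mathbb{N}$; then for $m\ge\max(n,k)$ the triangle inequality gives
$$
|x_n-x|\le|x_n-x_m|+|x_m-x|\le p_n+r_k.
$$
Since $p_n+r_k\downarrow p_n$ in $E$, taking the infimum over $k$ yields $|x_n-x|\le p_n$ for every $n$, which is the bound required by $(i)$. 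The subtlety here is that the order limit $x$ is a priori only controlled by some uncontrolled witness $(r_k)$, and one must combine it with the given Cauchy witness $(p_n)$ and then discard $(r_k)$ via an infimum argument in the vector lattice.

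Finally, $(i)\Rightarrow(ii)$: if $\mathcal{A}$ is collective order Cauchy with witness $(p_n)$, then each individual sequence $(a_n)\in\mathcal{A}$ satisfies the hypothesis of $(i)$ with the \emph{same} $(p_n)$, so $(i)$ provides $c_a\in E$ with $|a_n-c_a|\le p_n$ for all $n$; the fact that the witness $(p_n)$ is uniform across $\mathcal{A}$ from the outset is precisely what delivers the collective conclusion. Closing the cycle establishes the equivalence of all four conditions.
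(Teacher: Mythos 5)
Your proof is correct and follows essentially the same route as the paper: the same cycle of implications $i)\Rightarrow ii)\Rightarrow iii)\Rightarrow iv)\Rightarrow i)$, with the only substantive step being $iv)\Rightarrow i)$. Your explicit triangle-inequality and infimum argument (using $p_n+r_k\downarrow p_n$) just spells out what the paper compresses into ``passing to the order limit'' in $|x_n-x_{n''}|\le p_n$.
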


\begin{proof}
Implications $i)\Longrightarrow ii)\Longrightarrow iii)\Longrightarrow iv)$ are trivial.

\medskip
$iv)\Longrightarrow i)$\
Let $|x_{n'}-x_{n''}|\le p_n$ for all $n',n''\ge n$ and some $p_n\downarrow 0$ in $E$.
Since $E$ is sequentially order complete, $x_n\convo x$ for some $x\in E$.
Sending $n''\to\infty$ and passing to the order limit in the inequality $|x_n-x_{n''}|\le p_n$, 
where $n''\ge n$, we obtain $|x_n-x|\le p_n$ for all $n$.
\end{proof}

%%%%%%%%%%%%%%%%%%%%
\section{Collectively $\sigma$-Levi sets of operators}
%%%%%%%%%%%%%%%%%%%%

\hspace{3mm}
Recall that a set $A$ of operators between normed spaces $X$ and $Y$
is collectively compact whenever 
$\bigcup\limits_{T\in A}T(B_X)$ is relatively compact in $Y$ \cite{AP1968}.
This section is devoted to collectively $\sigma$-Levi sets of operators,
their relation to collectively compact sets, and the domination problem
for collectively $\sigma$-Levi sets. We begin with 
the following collective version of Definition~\ref{order-to-topology}.

\begin{definition}\label{collective Levi} 
{\em
Let $E$ be a normed lattice, $F$ a vector lattice, and $A\subseteq\text{\rm L}(E,F)$.
We say that $A$ is:
\begin{enumerate}[$a)$]
\item 
a \text{\rm collectively $\sigma$-Levi set} if, for every 
increasing bounded $(x_n)$ in $E_+$,
there exists an indexed subset $\{x_T\}_{T\in A}$ of $E$
satisfying $\{(Tx_n): T\in A\}\stackrel{\text{\rm c-o}}{\to}\{Tx_T\}_{T\in A}$.
\item 
a \text{\rm collectively quasi-c-$\sigma$-Levi} set if, for every 
increasing bounded $(x_n)$ in $E_+$,
there exists an indexed subset $\{y_T\}_{T\in A}$ of $F$
satisfying $\{(Tx_n): T\in A\}\stackrel{\text{\rm c-o}}{\to}\{y_T\}_{T\in A}$.
\item 
a \text{\rm collectively quasi-$\sigma$-Levi} set if, for every 
increasing bounded $(x_n)$ in $E_+$,
the set $\{(Tx_n): T\in A\}\subseteq F^{\mathbb{N}}$ is
collective order Cauchy.
\end{enumerate}}
\end{definition}
\medskip
\noindent
Obviously, $T$ lies in $\text{\rm L}^\sigma_{\text{\rm Levi}}(E,F)$
($\text{\rm L}^\sigma_{\text{\rm qcLevi}}(E,F)$, 
$\text{\rm L}^\sigma_{\text{\rm qLevi}}(E,F)$)
iff the set $\{T\}$ is a collectively $\sigma$-Levi
(resp., collectively quasi-c-$\sigma$-Levi, collectively quasi-$\sigma$-Levi)
subset of $\text{\rm L}(E,F)$.

\medskip
We continue with the question on which properties of $\sigma$-Levi,
quasi-c-$\sigma$-Levi, and quasi-$\sigma$-Levi operators
mentioned in Lemma~\ref{prop2} have collective versions.
The properties described in Lemma~\ref{prop2}\,$i)$
have the following extension.

\begin{prop}\label{prop2-CLS}%%%OK
Let $E$ be a normed lattice, $F$ a vector lattice, and $A,B\subseteq\text{\rm L}(E,F)$. 
The following holds.
\begin{enumerate}[$i)$]
\item
If $A$ and $B$ are both collectively quasi-c-$\sigma$-Levi then the set
$\{\alpha T+\beta S: |\alpha|+|\beta|\le 1, T\in A, S\in B\}$ is also
collectively quasi-c-$\sigma$-Levi.
\item
If $A$ and $B$ are both collectively quasi-$\sigma$-Levi then the set
$\{\alpha T+\beta S: |\alpha|+|\beta|\le 1, T\in A, S\in B\}$ is also
collectively quasi-$\sigma$-Levi.
\end{enumerate}
\end{prop}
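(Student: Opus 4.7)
The plan is to reduce both parts to a single estimate based on the triangle inequality, exploiting the constraint $|\alpha|+|\beta|\le 1$ to absorb the scalars into a common bounding sequence.

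Fix an increasing bounded sequence $(x_n)$ in $E_+$. First I would apply the hypothesis on $A$ to produce a sequence $p_n\downarrow 0$ in $F$ and an indexed subset $\{y_T\}_{T\in A}$ of $F$ (for part $i)$) with $|Tx_n-y_T|\le p_n$ for all $n$ and $T\in A$; for part $ii)$ the hypothesis yields $p_n\downarrow 0$ in $F$ with $|Tx_{n'}-Tx_{n''}|\le p_n$ whenever $n',n''\ge n$, for every $T\in A$. Then, applying the corresponding hypothesis on $B$, I obtain $q_n\downarrow 0$ in $F$ together with $\{z_S\}_{S\in B}$ (in part $i)$) so that $|Sx_n-z_S|\le q_n$, or (in part $ii)$) $|Sx_{n'}-Sx_{n''}|\le q_n$ when $n',n''\ge n$, for every $S\in B$. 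Set $r_n:=p_n+q_n$; since the sum of two sequences decreasing to $0$ in a vector lattice again decreases to $0$, we have $r_n\downarrow 0$ in $F$.

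For part $i)$, define the candidate limit of the operator $\alpha T+\beta S$ evaluated at the sequence to be $w_{\alpha,\beta,T,S}:=\alpha y_T+\beta z_S$, and compute
\[
   |(\alpha T+\beta S)x_n-w_{\alpha,\beta,T,S}|\le |\alpha|\,|Tx_n-y_T|+|\beta|\,|Sx_n-z_S|\le |\alpha|p_n+|\beta|q_n\le r_n,
\]
where the last inequality uses $|\alpha|+|\beta|\le 1$ together with $p_n,q_n\ge 0$. This is precisely the definition of collective order convergence of $\{((\alpha T+\beta S)x_n)\}$ to $\{w_{\alpha,\beta,T,S}\}$, so the indexed family is collectively quasi-c-$\sigma$-Levi. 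For part $ii)$ the same estimate applied to differences gives
\[
   |(\alpha T+\beta S)x_{n'}-(\alpha T+\beta S)x_{n''}|\le |\alpha|p_n+|\beta|q_n\le r_n
\]
whenever $n',n''\ge n$, establishing the collective order Cauchy property.

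I do not expect any serious obstacle here: everything reduces to the triangle inequality, the trivial observation that $|\alpha|p_n+|\beta|q_n\le p_n+q_n$ under the constraint, and the fact that the sum of two order-null decreasing sequences is again order-null decreasing. If one prefers a more ``structural'' write-up, items $i),ii)$ of Proposition~\ref{elem prop} can be invoked: the set $\{(Tx_n):T\in A\}\cup\{(Sx_n):S\in B\}$ is collective order convergent (resp.\ collective order Cauchy, via the obvious analogue of Proposition~\ref{elem prop}~$ii)$ for Cauchy sets, which is proved identically), and then the displayed family lies inside a scalar combination controlled by $|\alpha|+|\beta|\le 1$. Either route yields the proposition.
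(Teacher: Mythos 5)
Your argument is correct and coincides with the paper's own proof: both parts are obtained by extracting the bounding sequences $p_n\downarrow 0$ and $q_n\downarrow 0$ from the hypotheses on $A$ and $B$, applying the triangle inequality, and absorbing the scalars via $|\alpha|p_n+|\beta|q_n\le p_n+q_n\downarrow 0$. No changes needed.
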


\begin{proof}
$i)$\ 
By the assumption, there exist sequences $p_n\downarrow 0$, $q_n\downarrow 0$ in $F$,
and indexed subsets $\{y_T\}_{T\in A}$, $\{z_S\}_{S\in B}$ of $F$ satisfying 
$|Tx_n-y_T|\le p_n$, $|Sx_n-z_S|\le q_n$ for all $T\in A$, $S\in B$, and $n\in\mathbb{N}$.
The result follows from 
$$
   |(\alpha T+\beta S)x_n-(\alpha y_T+\beta z_S)|\le 
   |\alpha|p_n+|\beta|q_n\le (p_n+q_n)\downarrow 0.
$$
$ii)$\ 
Let sequences $(p_n)$, $(q_n)$ in $F$
satisfy $|Tx_{n'}-Tx_{n''}|\le p_n\downarrow 0$ and $|Sx_{n'}-Sx_{n''}|\le q_n\downarrow 0$
for all $T\in A$, $S\in B$, and $n',n''\ge n$. The result follows from 
$$
   |(\alpha T+\beta S)x_{n'}-(\alpha T+\beta S)x_{n''})|\le 
   (p_n+q_n)\downarrow 0,
$$
for $n',n''\ge n$.
\end{proof}

The items $ii)$ and $iii)$ of Lemma~\ref{prop2} have no reasonable 
collective extension. To see this, define norm-one functionals 
$T_k$ on $c_0$ by $T_ka=a_k$.
Thus, $T_k\in\text{\rm L}_{\text{\rm FR}}(c_0,\mathbb{R})$,
yet the set $\{T_k\}_{k\in\mathbb{N}}$ is not even collectively quasi-$\sigma$-Levi.
Indeed, for the increasing bounded sequence $x_n=\sum_{m=1}^n e_m$
in $c_0$, there is no sequence $p_n\downarrow 0$ in $\mathbb{R}$
with $|T_kx_{n'}-T_kx_{n''}|\le p_n$ for all $k$ and $n',n''\ge n$,
since $|T_{n+1}x_n-T_{n+1}x_{n+1}|=1$ for every $n$.
Moreover, $\{T_k\}_{k\in\mathbb{N}}$ is a collectively compact subset 
of ${\rm L}_+(c_0,\mathbb{R})$ that is not collectively quasi-$\sigma$-Levi.

\medskip
Now, we apply Theorem~\ref{elem BL prop}
for strengthening Proposition~\ref{prop2-CLS}
in the Banach lattice setting as follows.

\begin{theorem}\label{prop2iii-CLS}%%%OK
Let $E$ be a normed lattice, $F$ a Banach lattice, and let
$A$ be a bounded collectively quasi-c-$\sigma$-Levi subset of $\text{\rm L}(E,F)$.
Then the set
$$
   \Big\{\Big(\sum\limits_{i=1}^\infty\alpha_iT_i\Big): T_i\in A
   \ \text{\rm and} \ \sum\limits_{i=1}^\infty|\alpha_i|\le 1\Big\}
$$
is collectively quasi-c-$\sigma$-Levi, where 
$\sum\limits_{i=1}^\infty\alpha_iT_i$ is the limit of 
partial sums $\sum\limits_{i=1}^n\alpha_iT_i$ in the operator norm.
\end{theorem}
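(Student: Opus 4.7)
The plan is to reduce everything to a direct application of the ``in particular'' clause of Theorem~\ref{elem BL prop}. First I would fix a bounded increasing sequence $(x_n)$ in $E_+$ and set $K:=\sup_{T\in A}\|T\|$. Since $F$ is Banach, $\text{\rm L}(E,F)$ is complete in the operator norm, so for any $T_i\in A$ and $(\alpha_i)$ with $\sum_i|\alpha_i|\le 1$ the series $S:=\sum_i\alpha_iT_i$ is absolutely convergent (using $\sum_i\|\alpha_iT_i\|\le K$) and defines an operator in $\text{\rm L}(E,F)$, so the dilated set is well-defined.

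By the collective quasi-c-$\sigma$-Levi hypothesis on $A$, there exist an indexed family $\{y_T\}_{T\in A}\subseteq F$ and a sequence $p_n\downarrow 0$ in $F$ with $|Tx_n-y_T|\le p_n$ for all $T\in A$ and $n\in\mathbb{N}$. Hence $\|y_T\|\le K\|x_1\|+\|p_1\|$ is uniformly bounded in $T$, so $y_S:=\sum_i\alpha_iy_{T_i}$ converges in the Banach space $F$. Passing to norm-limits as $m\to\infty$ in the identity $\sum_{i=1}^m\alpha_iT_ix_n-\sum_{i=1}^m\alpha_iy_{T_i}=\sum_{i=1}^m\alpha_i(T_ix_n-y_{T_i})$ gives
$$
  Sx_n-y_S=\sum_{i=1}^\infty\alpha_i(T_ix_n-y_{T_i}),
$$
where the right-hand side is understood as the norm-limit of its partial sums.

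Finally I would apply the ``in particular'' part of Theorem~\ref{elem BL prop} to the collective order-null set $\mathcal{A}:=\{(Tx_n-y_T)_n:T\in A\}$ with $M=1$; this produces a single sequence $q_n\downarrow 0$ in $F$ satisfying $\bigl|\sum_{i=1}^\infty\alpha_i(T_ix_n-y_{T_i})\bigr|\le q_n$ for every admissible choice of $(T_i,\alpha_i)$ and every $n$. Combined with the display above, this gives $|Sx_n-y_S|\le q_n$ uniformly over the dilated set, which is precisely the collective quasi-c-$\sigma$-Levi property. The main substantive step is the identification of $Sx_n-y_S$ with the norm-limit series $\sum_i\alpha_i(T_ix_n-y_{T_i})$ to which Theorem~\ref{elem BL prop} applies; once that identification is in place the order bound is delivered by the theorem itself and no further limit passage is needed.
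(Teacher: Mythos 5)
Your proposal is correct and takes essentially the same route as the paper's proof: pass to the collective order-null family $\{(Tx_n-y_T)_n : T\in A\}$ via the quasi-c-$\sigma$-Levi hypothesis and then invoke Theorem~\ref{elem BL prop} to get a single dominating sequence for all admissible $(\alpha_i,T_i)$. You additionally spell out two points the paper leaves implicit, namely the norm convergence of $\sum_{i}\alpha_i y_{T_i}$ in $F$ and the identification of $Sx_n-y_S$ with the norm-limit series $\sum_{i}\alpha_i(T_ix_n-y_{T_i})$, which is a welcome bit of extra care rather than a deviation.
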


\begin{proof}
Let $(x_n)\uparrow$ in $(B_E)_+$. Then 
$\{(Tx_n)\}_{T\in A}\stackrel{\text{\rm c-o}}{\to}\{y_T\}_{T\in A}$
for some subset $\{y_T\}_{T\in A}$ of $F$.
Proposition~\ref{elem prop}\,$vi)$ gives 
$\{(Tx_n-y_T)\}_{T\in A}\stackrel{\text{\rm c-o}}{\to}\{0\}_{T\in A}$.
By Theorem~\ref{elem BL prop}, 
$$
   \Big\{\Big(\sum\limits_{i=1}^\infty\alpha_i(T_ix_n-y_{T_i})\Big)\Big\}_{T_i\in A,
   \sum\limits_{i=1}^\infty|\alpha_i|\le 1}
   \stackrel{\text{\rm c-o}}{\to}\{0\}_{T_i\in A,
   \sum\limits_{i=1}^\infty|\alpha_i|\le 1}.
$$
So, there exists a sequence $p_n\downarrow 0$ in $F$ satisfying 
$$
   \Big|\Big(\sum\limits_{i=1}^\infty\alpha_iT_i\Big)x_n-
   \sum\limits_{i=1}^\infty\alpha_iy_{T_i}\Big|=
   \Big|\sum\limits_{i=1}^\infty\alpha_i(T_ix_n-y_{T_i})\Big|\le p_n
$$
for all $n$, $T_i\in A$, and all $\alpha_i$ with 
$\sum\limits_{i=1}^\infty|\alpha_i|\le 1$,
where the series $\sum\limits_{i=1}^\infty\alpha_iT_i$
converges in the operator norm due to boundedness of $A$.
The proof is complete.
\end{proof}
\noindent 
Since every Dedekind $\sigma$-complete vector lattice is sequentially order complete,
the next corollary follows from Proposition \ref{elem complete}
and Theorem \ref{prop2iii-CLS}.

\begin{cor}\label{cor2iii-CLS}%%%OK
Let $E$ be a normed lattice, $F$ be a Dedekind $\sigma$-complete Banach lattice, and
$A$ be a bounded collectively quasi-$\sigma$-Levi subset of $\text{\rm L}(E,F)$. Then
the set 
$
   \Big\{\Big(\sum\limits_{i=1}^\infty\alpha_iT_i\Big): T_i\in A
   \ \text{\rm and} \ \sum\limits_{i=1}^\infty|\alpha_i|\le 1\Big\}
$
is collectively quasi-c-$\sigma$-Levi.
\end{cor}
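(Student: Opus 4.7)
The plan is to observe that, under the added hypothesis that $F$ is Dedekind $\sigma$-complete, any collectively quasi-$\sigma$-Levi subset of $\text{\rm L}(E,F)$ is automatically collectively quasi-c-$\sigma$-Levi, so the statement reduces to a direct application of Theorem~\ref{prop2iii-CLS}.

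Concretely, I would first fix an arbitrary increasing bounded sequence $(x_n)$ in $E_+$ and use the collectively quasi-$\sigma$-Levi assumption on $A$ to obtain a sequence $p_n\downarrow 0$ in $F$ with $|Tx_{n'}-Tx_{n''}|\le p_n$ for all $T\in A$ whenever $n',n''\ge n$. By definition this says that $\{(Tx_n)_n: T\in A\}$ is a collective order Cauchy subset of $F^{\mathbb{N}}$.

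Next I would invoke Proposition~\ref{elem complete}. Since every Dedekind $\sigma$-complete vector lattice is sequentially order complete, condition $iv)$ of that proposition holds for $F$, hence so does condition $ii)$. Applied to the collective order Cauchy set above with the same witness $(p_n)$, this yields an indexed family $\{y_T\}_{T\in A}\subseteq F$ satisfying $|Tx_n-y_T|\le p_n$ for all $n$ and $T\in A$; equivalently, $\{(Tx_n)\}_{T\in A}\stackrel{\text{\rm c-o}}{\to}\{y_T\}_{T\in A}$. This is precisely the definition of $A$ being collectively quasi-c-$\sigma$-Levi.

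Finally, since $A$ is bounded by hypothesis and now known to be collectively quasi-c-$\sigma$-Levi, Theorem~\ref{prop2iii-CLS} applies verbatim and gives the desired conclusion. There is no genuine obstacle; the corollary is essentially a packaging of Proposition~\ref{elem complete} with Theorem~\ref{prop2iii-CLS}. The only point requiring minor care is ensuring that the witness $(p_n)$ for the Cauchy condition can be reused as the witness for collective order convergence, which is exactly the content of the implication $iv)\Longrightarrow ii)$ of Proposition~\ref{elem complete}.
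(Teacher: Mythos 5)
Your proposal is correct and follows exactly the route the paper intends: use Proposition~\ref{elem complete} (via Dedekind $\sigma$-completeness implying sequential order completeness) to upgrade the collectively quasi-$\sigma$-Levi property of $A$ to collectively quasi-c-$\sigma$-Levi, then apply Theorem~\ref{prop2iii-CLS}. The paper states this derivation in one sentence without writing out the details; your version simply makes the same argument explicit.
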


\medskip
Now, we discuss of the ``collective" domination problem 
for Levi sets of operators. First, recall some already known 
related results for Levi operators. 

The quasi-$\sigma$-Levi operators do satisfy the domination property
(cf. \cite[Theorem 2.7]{AlEG2022}, \cite[Theorem 3]{GE2022}). 
We do not know where or not the quasi-c-$\sigma$-Levi operators 
satisfy the domination property. In general, $\sigma$-Levi operators 
do not satisfy the domination property (cf. \cite[Example 7]{E2024}).

\begin{exam}\label{domination for sigma Levi fails}%%%OK
{\em
Define operators $S,T\in\text{\rm L}(c)$ by 
$$
   S\left(\sum_{n=1}^\infty a_n e_n\right)=\sum_{n=1}^\infty \frac{a_n}{2^n} e_n;
   T\left(\sum_{n=1}^\infty a_n e_n\right)=
   \sum_{n=1}^\infty\Bigl(\sum_{k=1}^\infty\frac{a_k}{2^k}\Bigl) e_n.
$$
Then $0\le S\le T$. Operator $T$ has rank one, and hence  
$T$ is $\sigma$-Levi by Lemma \ref{prop2}~$ii)$. 
However, $S\notin\text{\rm L}^\sigma_{\text{\rm Levi}}(c)$ 
due to Example \ref{Example1 c_0}.}
\end{exam}

\medskip
We use the following ``collective" notion of domination
for sets of operators. 

\begin{definition}\label{collective domination}
{\em
Let $A,B\subseteq\text{\rm L}_+(E,F)$.
Then $A$ is dominated by $B$ if, for each $S\in A$,
there exists $T\in B$ with $S\le T$.}
\end{definition}

\noindent
We  conclude the paper with the following ``collective" partial generalization of
\cite[Theorem~2.7]{AlEG2022} in the class of quasi-$\sigma$-Levi operators.

\begin{theorem}\label{col-do-qLevi}
{\em 
Let $E$ be a normed lattice, $F$ be a vector lattice, 
and $A,C\subseteq\text{\rm L}_+(E,F)$ be such that
$A$ is dominated by $C$. If $C$ is collectively quasi-$\sigma$-Levi
then $A$ is also collectively quasi-$\sigma$-Levi.}
\end{theorem}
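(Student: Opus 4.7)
My plan is to show directly that $\{(Sx_n):S\in A\}$ is collective order Cauchy for every increasing bounded $(x_n)$ in $E_+$, by transferring the single ``control sequence'' $p_n\downarrow 0$ provided by the hypothesis on $C$ to the family $A$. The argument relies only on positivity of the operators, monotonicity of $(x_n)$, and the definition of collective order Cauchyness.

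First, I fix an arbitrary increasing bounded sequence $(x_n)\subseteq E_+$. Since $C$ is collectively quasi-$\sigma$-Levi, there exists a sequence $p_n\downarrow 0$ in $F$ such that $|Tx_{n'}-Tx_{n''}|\le p_n$ for every $T\in C$ and all $n',n''\ge n$. The crucial point is that the same $p_n$ works simultaneously for every $T\in C$; this uniformity is what makes the domination argument go through.

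Next, for each $S\in A$, the domination hypothesis yields some $T_S\in C$ with $0\le S\le T_S$. Given $n',n''\ge n$, I may assume $n'\ge n''$ (the other case is symmetric). Then $x_{n'}-x_{n''}\ge 0$, and using $0\le S\le T_S$ componentwise on positive vectors I obtain
$$
   0\le Sx_{n'}-Sx_{n''}=S(x_{n'}-x_{n''})\le T_S(x_{n'}-x_{n''})=T_Sx_{n'}-T_Sx_{n''}\le p_n,
$$
where in the last step I used the collective order Cauchy condition for $C$ together with the fact that $T_Sx_{n'}-T_Sx_{n''}\ge 0$ (so absolute value is redundant).

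Taking the symmetric case into account, $|Sx_{n'}-Sx_{n''}|\le p_n$ holds for every $S\in A$ and all $n',n''\ge n$, with the same $p_n\downarrow 0$. This is exactly the collective order Cauchy condition for $\{(Sx_n):S\in A\}$, so $A$ is collectively quasi-$\sigma$-Levi. I do not anticipate any substantive obstacle here; the only delicate point is remembering that the $p_n$ produced by the definition is uniform over all of $C$, which is precisely what permits a single common bound independent of the choice $S\mapsto T_S$.
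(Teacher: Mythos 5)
Your proof is correct and follows essentially the same approach as the paper: transfer the uniform control sequence $p_n\downarrow 0$ from $C$ to $A$ via domination, positivity, and monotonicity of $(x_n)$. The only (cosmetic) difference is that you compare $x_{n'}$ and $x_{n''}$ directly after assuming $n'\ge n''$, obtaining the bound $p_n$, whereas the paper detours through $x_n$ via the triangle inequality and settles for $2p_n$.
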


\begin{proof}
Let $(x_n)$ be an increasing sequence in $(B_E)_+$.
By the assumption, $C$ is collectively quasi-$\sigma$-Levi,
and hence the set $\{(Tx_n): T\in C\}\subseteq F^{\mathbb{N}}$ is
collective order Cauchy. By Definition \ref{COCompleteS},
for some $p_n\downarrow 0$ in $F$, 
$|Tx_{n'}-Tx_{n''}|\le p_n$ holds for all $T\in C$ whenever $n',n''\ge n$.

Let $S\in A$. Then $0\le S\le T_S$ for some $T_S\in C$.
Since $|T_Sx_{n'}-T_Sx_{n''}|\le p_n$ for $n',n''\ge n$,
$$
   |Sx_{n'}-Sx_{n''}|\le|Sx_{n'}-Sx_n|+|Sx_{n''}-Sx_n|=
$$
$$
   S(x_{n'}-x_n)+S(x_{n''}-x_n)\le T_S(x_{n'}-x_n)+T_S(x_{n''}-x_n)= 
$$
$$
   |T_Sx_{n'}-T_Sx_{n}|+|T_Sx_{n''}-T_Sx_{n}|\le 2p_n 
$$
for all $n',n''\ge n$. Because $S\in A$ is arbitrary and $2p_n\downarrow 0$, we conclude
that $A$ is collectively quasi-$\sigma$-Levi.
\end{proof}

{\normalsize 

}
\end{document}